\theoremstyle{plain}
\newtheorem{thm}{Theorem}
\newtheorem{lem}[thm]{Lemma}
\newtheorem{lem*}[thm]{Lemma}
\newtheorem{prop}[thm]{Proposition}
\theoremstyle{definition}
\newtheorem{dfn}{Definition}
\theoremstyle{remark}
\newtheorem{rem}{Remark}
\newtheorem{rem*}{Remark}
\numberwithin{rem}{section} 
\numberwithin{dfn}{section} 
\numberwithin{equation}{section} 
\numberwithin{thm}{section} 
\def\!{\operatorname{!}}
\def\be{\begin{equation}}
\def\ee{\end{equation}}
\def\beg{\begin{equation*}}
\def\eeg{\end{equation*}}
\def\F{\mathbb F}
\def\1{\bold 1}
\begin{document}
\title[ Linear relations...]{ Linear relations for Lauricella $F_D$  functions and symmetric polynomials}

\author{Piotr Kraso{\'n}, Jan Milewski}

\date{\today}

\address{  Institute of Mathematics, Department of Exact and Natural Sciences, University of Szczecin, ul. Wielkopolska 15, 70-451 Szczecin, Poland 
}
\email{ piotrkras26@gmail.com}

\address{Institute of Mathematics, Faculty of Electrical Engineering, Pozna{\'n} University of Technology, ul. Piotrowo 3A, 60-965 Pozna{\'n}, Poland}
\email{jsmilew@wp.pl}
\subjclass[2010]{33C65, 33F99, 26A33}
\keywords{Lauricella functions, Euler type integrals, recursive formulas}

\thanks{}

\maketitle

\begin{abstract}
In this paper we develop an algorithm   for  obtaining some new  linear relations among the Lauricella $F_D$ functions. 
Relations we obtain, generalize those hinted in the work of B. C. Carlson \cite{c63}.
The coefficients of these relations are contained in the ring of  polynomials in the variables $x_1,\dots,x_N$  or  in some exceptional cases in the field of rational functions ${\mathbb R}(x_1,\dots,x_N,p)$.
The method is based on expressing suitably chosen Euler type indefinite integrals 
 associated with these functions recursively as  linear combination of some  other Euler type integrals and elementary functions and then integrating over the interval $[0,1].$ 
We describe the complete algorithm for obtaining these relations. We believe that such relations might be useful in computations.
\end{abstract}
\section{Introduction} 
 Since the  hypergeometric functions in many variables are very general, relations among them drawn a lot of attention
cf. \cite{c63}, \cite{ks03}, \cite{kt11}, \cite{v03}, \cite{ex83}, \cite{b08}. 
In his seminal paper on the Lauricella $F_D$ function \cite{c63} B.C. Carlson introduced some linear, with the coefficients in the ring of homogenous polynomials, relations among the  hypergeometric functions $R(a,b_1,\dots , b_N, z_1,\dots , z_N).$ These functions $R(a,b_1,\dots , b_N, z_1,\dots , z_N)$, defined in \cite{c63}, are directly related to the Lauricella functions $F_D^{(N)}\left(\begin{matrix}a,&b_1,\dots , b_N\\  & c  \end{matrix}\mid x_1,\dots , x_N \right)$ where $c=b_1+\dots+b_N\neq 0.$ 
  The author of \cite{c63}   obtained contiguous relations for the functions $R.$ These contiguous relations require change of two parameters by one (changing $b_i$ by one implies  the corresponding change of $c$) 
 and generalize the well-known relations for the Appell functions \cite{ak26}. The author of \cite{c63} claims that using the recursion based on the \cite[formula (4.6)]{c63}  and solution of some system of equations one can obtain 
 formulas in which  only one parameter changes e.g. the formulas involve either $a+n, n=0,\pm1, \pm 2,\dots$ or the change of one of the coefficients $b_i$. See the discussion at the end of section IV of \cite{c63}.
 Naturally, in the case of the Lauricella function $F_D$ a change in the parameter $a$ requires a corresponding change 
 in the parameter $c$ (cf.  (\ref{Lur1})).
 In  this paper we derive  (cf. formulas (\ref{relpol}) - (\ref{relpol01}) ) such "pure" relations  by means of a direct algorithm involving calculation of carefully chosen
 Euler type integrals that are associated with those that represent the Lauricella $F_D$ functions. Direct means that we work   with the Lauricella functions not with the other hypergeometric function (as $R$ or other) and also that we do not have to use recursion repetitively. Moreover, we do not have to limit ourselves to the condition $c=b_1+\dots +b_N\neq 0.$ In our method all the parameters have to obey only the usual conditions for the convergence of the defining power series. Therefore, some of our relations are new and generalize 
 those hinted in \cite{c63}.
 We provide explicit formulas for the coefficients of our linear relations for the Lauricella $F_D$ functions. They are described by means of the symmetric polynomials in the arguments of the functions $F_D.$
We describe a complete algorithm of symbolic computations that leads  to the linear     
  relations among the Lauricella $F_D$ functions. The coefficients of the relations are either in the ring of  polynomials or the  field of rational functions
  in several variables (c.f. Theorem \ref{main})
  The methods described in this paper are valid for arbitrary number of variables $N$ i.e. for any $F_D^{(N)}\left(\begin{matrix}a,&b_1,\dots , b_N \\  & c  \end{matrix}\mid x_1,\dots , x_N \right).$

\section{Preliminaries} 
Let 
\begin{equation}\label{Lur1}
F_D^{(N)}\left(\begin{matrix}a,&b_1,\dots , b_N \\  & c  \end{matrix}\mid x_1,\dots , x_N \right) = {\sum}_{i_1,\dots , i_N = 1}^{\infty}\frac{(a)_{i_1+\dots +i_N} {{(b_{1})}_{i_1}}\dots 
 {({b_{N})}_{i_N}}}{(c)_{i_1+\dots +i_N}\cdot {i_1}!\cdot \dots  {i_N}!}x_1^{i_1}\dots \, x_N^{i_N},
\end{equation}
where  $(s)_{i}$ is a Pochhammer symbol,
denote the Lauricella function od type $D$  in $N$ variables  \cite{l93}. Then  for $|x_i|<1, i=1,\dots ,N$ and   $c>a>0$ we have an Euler type representation:
\begin{equation}\label{Lur2}
F_D^{(N)}\left(\begin{matrix}a,& b_1, \dots , b_N\\  & c \end{matrix} \mid x_1,\dots , x_N\right) = L{\int}_{0}^{1} t^{a-1}(1-t)^{c-a-1}(1-x_1t)^{-b_1}\dots (1-x_Nt)^{-b_N}dt
\end{equation}
where $L=\frac{\Gamma (c)}{{\Gamma (a)} {\Gamma (c-a)}}$ and $\Gamma$ denotes the gamma Euler  function. Notice that $L^{-1}=B(a,c-a)$ where 
\[ B(x,y)={\int}_0^1t^{x-1}(1-t)^{y-1}dt,\]
defined for  ${\mathrm{Re}}x>0$ and ${\mathrm{Re}}y >0 $  is the integral representation of the beta Euler function.

Let us express  the function under the integral in the right hand side of (\ref{Lur2}) in the following form:
\begin{equation}\label{integralL}
K(t)=t^{{\beta}_1}(1-t)^{{\beta}_2}{\prod}_{i=1}^{N}(1-x_{i}t)^{{\alpha}_i}
\end{equation}
where $ {\beta}_1=a-1, {\beta}_2=c-a-1$ and
${\alpha}_i=-b_{i}$ for  $ i=1,\dots, N.$ 
Let further
\begin{equation}\label{intj}
J({\beta}_1,{\beta}_2,{\alpha}_1,\dots {\alpha}_n)={\int}_0^1t^{{\beta}_1}(1-t)^{{\beta}_2}{\prod}_{i=1}^{N}(1-x_{i}t)^{{\alpha}_i}dt
\end{equation}

\begin{rem}
Notice that the integral in the right-hand side of  (\ref{Lur2}) is convergent for   \linebreak ${\mathrm{Re}}x_i <1, i=1,\dots ,n.$ 
Therefore the formula (\ref{Lur2}) describes in fact an analytic continuation of the function given by the formula 
(\ref{Lur1}).
\end{rem}
In what follows we  consider the real integrals. The modification for the complex variables is standard.

\subsection{Some elementary identities}
In this subsection we show how some well known  elementary relations for the Gauss'  hypergeometric functions readily generalize to the case of Lauricella  $F_D$ functions in $n$-variables. 
\begin{prop}\label{elm1} We have the following identity (the first Pfaff identity):
\begin{equation}\label{fr}
F_D^{(N)}\left(\begin{matrix}a,&b_1,\dots , b_N \\  & c  \end{matrix}\mid x_1,\dots , x_N \right) =\end{equation}
\[ \left[ \prod_{i=1}^{N}(1-x_i)^{-b_i}\right]F_D^{(N)}\left(\begin{matrix}c-a,&b_1,\dots , b_N \\  & c  \end{matrix}\mid y_1,\dots , y_N \right)  \] 
where
\[ y_i=\frac{-x_i}{1-x_i} \; .\]
\end{prop}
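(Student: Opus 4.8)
The plan is to establish the identity first on the elementary domain $|x_i|<1$ with $c>a>0$ by means of the Euler-type integral representation (\ref{Lur2}), and then to extend it to the full region of convergence by analytic continuation, exactly as anticipated in the Remark following (\ref{intj}). The whole argument rests on a single change of variables $s=1-t$ in the integral (\ref{Lur2}).

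First I would write out the right-hand side of (\ref{Lur2}) and substitute $t=1-s$, so that $dt=-ds$ and the limits $t\colon 0\to 1$ become $s\colon 1\to 0$, the sign restoring the orientation to $\int_0^1$. Under this substitution $t^{a-1}$ becomes $(1-s)^{a-1}$ and $(1-t)^{c-a-1}$ becomes $s^{c-a-1}$, so the roles of the exponents $a-1$ and $c-a-1$ are interchanged. The decisive algebraic step is the factorization of each remaining factor,
\[
1-x_i t = 1-x_i(1-s) = (1-x_i)\Bigl(1-\tfrac{-x_i}{1-x_i}\,s\Bigr)=(1-x_i)(1-y_i s),
\]
with $y_i=-x_i/(1-x_i)$ precisely as in the statement. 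Hence $(1-x_i t)^{-b_i}=(1-x_i)^{-b_i}(1-y_i s)^{-b_i}$, and the constant factors $(1-x_i)^{-b_i}$ can be pulled outside the integral.

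After these manipulations the right-hand side of (\ref{Lur2}) becomes
\[
L\Bigl[\prod_{i=1}^N (1-x_i)^{-b_i}\Bigr]\int_0^1 s^{(c-a)-1}(1-s)^{a-1}\prod_{i=1}^N(1-y_i s)^{-b_i}\,ds.
\]
I would then recognize the remaining integral as exactly the Euler representation (\ref{Lur2}) for $F_D^{(N)}$ with first parameter $c-a$ (note that $c-(c-a)-1=a-1$) and arguments $y_1,\dots,y_N$. The key observation that makes the constants match is that $L=\Gamma(c)/(\Gamma(a)\Gamma(c-a))$ is symmetric under the exchange $a\leftrightarrow c-a$, so the same $L$ normalizes both the original and the transformed representation; the two normalizing constants therefore cancel and the asserted identity drops out.

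The step requiring the most care is not the computation but checking that the transformed integral still satisfies the hypotheses of (\ref{Lur2}). The exponent conditions $c>a>0$ are manifestly symmetric in $a$ and $c-a$, hence preserved, and one verifies that $y_i=-x_i/(1-x_i)<1$ whenever $x_i<1$ (equivalently $-x_i<1-x_i$), which guarantees $1-y_i s>0$ on $[0,1]$ and so convergence of the transformed integral. Once the identity holds on $|x_i|<1$, it extends to the entire common domain of convergence by analytic continuation in the variables $x_i$, both sides being analytic there.
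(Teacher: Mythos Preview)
Your proof is correct and follows essentially the same approach as the paper: the substitution $s=1-t$ (the paper writes $u=1-t$) in the Euler integral (\ref{Lur2}), together with the factorization $1-x_it=(1-x_i)(1-y_is)$, yields the identity (\ref{ifr}) for the $J$-integrals and hence (\ref{fr}). Your added remarks on the symmetry of $L$ under $a\leftrightarrow c-a$ and on the domain check $y_i<1$ are welcome details that the paper leaves implicit.
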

\begin {proof}
Substituting $u=1-t$ in the integral (\ref{intj}) we obtain the equality
\begin{equation}\label{ifr} J({\beta}_1, {\beta}_2 ; \alpha _1, \ldots , \alpha_n; x _1, \ldots , x_n)=\prod_{i=1}^n (1-x_i)^{\alpha _i} J(\beta_2, \beta_1; \alpha _1, \ldots , \alpha_n; y _1, \ldots , y_n),  \end{equation}
 \[ y_i=\frac{-x_i}{1-x_i} \; .\]
(\ref{fr}) follows from (\ref{ifr}).
 
\end{proof}

\begin{prop}\label{elm2}
 We have the following identity (the second Pfaff identity):
\begin{equation}\label{f2}
F_D^{(N)}\left(\begin{matrix}a,&b_1,\dots , b_N \\  & c  \end{matrix}\mid x_1,\dots , x_N \right) =\end{equation} 
\[(1-x_i)^{-a}F_D^{(N)}\left(\begin{matrix} a,& c-\sum b_i, b_1,\dots , \widehat b_i, \ldots , b_N \\  & c  \end{matrix}\mid y_i, z_{i,1},\dots ,\widehat{z_{i,i}}, \ldots z_{i,N} \right) \]

where
\[ z_{i,j}=\frac{x_j-x_i}{1-x_i} \; ,\] 
$y_i$ is as in the Proposition \ref{elm1} and hat over the symbol means that corresponding variable is omitted.

\end{prop}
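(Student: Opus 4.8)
The plan is to mirror the proof of Proposition \ref{elm1}: rather than the reflection $u=1-t$, I would apply to the integral representation (\ref{Lur2})--(\ref{intj}) the Möbius change of variables that fixes the endpoints of $[0,1]$ while pivoting about the $i$-th singular point. Concretely, set
\[ s=\frac{(1-x_i)\,t}{1-x_i t},\qquad\text{equivalently}\qquad t=\frac{s}{1-x_i+x_i s}. \]
Since $\mathrm{Re}\,x_i<1$, the denominator does not vanish on $[0,1]$, so this is an increasing bijection of $[0,1]$ onto itself with $s(0)=0$, $s(1)=1$ and $dt=\dfrac{1-x_i}{(1-x_i+x_i s)^2}\,ds$. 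The point of the substitution is that it rationalizes every factor of $K(t)$ over the common denominator $D:=1-x_i+x_i s$, and that $D$ itself factors as $D=(1-x_i)(1-y_i s)$ with $y_i=-x_i/(1-x_i)$ exactly the variable appearing on the right-hand side of (\ref{f2}).

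Next I would record how each factor transforms. A direct computation gives $t=s/D$, $1-t=(1-x_i)(1-s)/D$, $1-x_i t=(1-x_i)/D$, and, for $j\neq i$, $1-x_j t=(1-x_i)(1-z_{i,j}s)/D$ with $z_{i,j}=(x_j-x_i)/(1-x_i)$. Substituting these into $J(\beta_1,\beta_2;\alpha_1,\dots,\alpha_N)$ with $\beta_1=a-1$, $\beta_2=c-a-1$, $\alpha_j=-b_j$, and then writing $D=(1-x_i)(1-y_i s)$, collects all powers of $(1-x_i)$ into a single constant prefactor, while the $s$-dependence reassembles into an integral of the form (\ref{intj}) in the variables $y_i,z_{i,1},\dots,\widehat{z_{i,i}},\dots,z_{i,N}$.

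The crux — and the only place where care is needed — is the exponent bookkeeping. Counting the power of $(1-x_i)$ contributed by $(1-t)^{\beta_2}$, $(1-x_i t)^{\alpha_i}$, the factors $(1-x_j t)^{\alpha_j}$ and the Jacobian gives $c-a-\sum_k b_k$, while the net power of $D$ works out to $\sum_k b_k-c$; since $D=(1-x_i)(1-y_i s)$, the two contributions to the $(1-x_i)$-exponent cancel down to exactly $-a$, producing the claimed prefactor $(1-x_i)^{-a}$. The residual factor $(1-y_i s)^{\sum_k b_k-c}$ carries the exponent $-(c-\sum_k b_k)$, i.e. the new parameter paired with $y_i$ is $c-\sum_k b_k$, whereas each $(1-z_{i,j}s)^{-b_j}$ retains the parameter $b_j$ paired with $z_{i,j}$; the surviving $s^{a-1}(1-s)^{c-a-1}$ shows that $a$ and $c$ are unchanged, so the normalizing constant $L=\Gamma(c)/(\Gamma(a)\Gamma(c-a))$ is the same on both sides. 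Multiplying by $L$ and invoking (\ref{Lur2}) for both functions then yields (\ref{f2}). I expect the main obstacle to be purely this exponent arithmetic; note in passing that it forces the new parameter $c-\sum_k b_k$, which is why the classical normalization $c=\sum_k b_k$ must be dropped here, as otherwise that parameter would vanish.
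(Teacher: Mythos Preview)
Your proposal is correct and follows essentially the same route as the paper: the paper's proof applies the same M\"obius substitution $v=\dfrac{(1-x_i)t}{1-x_it}$ to the integral $J$, obtaining $J(\beta_1,\beta_2;\alpha_1,\dots,\alpha_N)=(1-x_i)^{-(\beta_1+1)}J\bigl(\beta_1,\beta_2;-(\beta_1+\beta_2+\sum\alpha_j+2),\alpha_1,\dots,\widehat{\alpha_i},\dots,\alpha_N;y_i,z_{i,1},\dots,\widehat{z_{i,i}},\dots,z_{i,N}\bigr)$, and then reads off (\ref{f2}). Your write-up simply makes the exponent bookkeeping explicit where the paper leaves it implicit.
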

\begin {proof}
We use the following rational substitution:
\begin{equation}\label{Pfaff2}  v=\frac{(1-x)t}{1-xt}, \quad dv =\frac{(1-x)dt}{(1-xt)^2} \end{equation}
which yields the following identity:
\be\label{j2} J(\beta_1, \beta_2 ; \alpha _1, \ldots , \alpha_n; x _1, \ldots , x_n)= \ee
\[ \frac{1}{{(1-x_i)}^{\beta_1+1}} J\left(\beta_1, \beta_2; -(\beta_1+\beta_2+\sum \alpha _j+2),
\alpha _1, \ldots , \widehat \alpha _i \ldots  \alpha_n; y _i, z_{i,1} \ldots , \widehat z_{ii} , \ldots z_{i,n}\right),  \]
 \[ z_{i,j}=\frac{x_j-x_i}{1-x_i} \; .\]
( \ref{f2}) follows directly from (\ref{j2}).
\end{proof}

\begin{prop} We have the following identities:
\begin{equation}\label{first}
aF_D^{(N)}\left(\begin{matrix}a+1,&b_1,\dots , b_N \\  & c+1  \end{matrix}\mid x_1,\dots , x_N \right) +
(c-a)F_D^{(N)}\left(\begin{matrix}a,&b_1,\dots , b_N \\  & c+1  \end{matrix}\mid x_1,\dots , x_N \right) =\end{equation}
\[
cF_D^{(N)}\left(\begin{matrix}a,&b_1,\dots , b_N \\  & c  \end{matrix}\mid x_1,\dots , x_N \right), \] 

\begin{equation}\label{second}
ax_iF_D^{(N)}\left(\begin{matrix}a+1,&b_1,\dots , b_N \\  & c+1  \end{matrix}\mid x_1,\dots , x_N \right) +
cF_D^{(N)}\left(\begin{matrix}a,&b_1,\dots ,b_i-1, \ldots b_N \\  & c  \end{matrix}\mid x_1,\dots , x_N \right)= 
\end{equation}
\[
cF_D^{(N)}\left(\begin{matrix}a,&b_1,\dots , b_N \\  & c  \end{matrix}\mid x_1,\dots , x_N \right), \] 

\begin{equation}\label{third}
apF_D^{(N)}\left(\begin{matrix}a+1,&b_1,\dots , b_N \\  & c+1  \end{matrix}\mid x_1,\dots , x_N \right) +
cF_D^{(N)}\left(\begin{matrix}a,&b_1,, \ldots b_N , -1\\  & c  \end{matrix}\mid x_1,\dots , x_N , p\right) =\end{equation}
\[
cF_D^{(N)}\left(\begin{matrix}a,&b_1,\dots , b_N \\  & c  \end{matrix}\mid x_1,\dots , x_N \right)\qquad p\notin \{ 1, x_1, \ldots x_N\}. \]

\end{prop}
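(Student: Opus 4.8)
The plan is to prove all three identities together by passing to the Euler integral representation (\ref{Lur2}) and reducing each relation to a trivial algebraic identity under the integral sign. First I would fix the notation $P(t)=\prod_{i=1}^N(1-x_it)^{-b_i}$ and $L(a,c)=\frac{\Gamma(c)}{\Gamma(a)\Gamma(c-a)}$, so that
\[
F_D^{(N)}\left(\begin{matrix}a,&b_1,\dots,b_N\\&c\end{matrix}\mid x_1,\dots,x_N\right)=L(a,c)\int_0^1 t^{a-1}(1-t)^{c-a-1}P(t)\,dt,
\]
and record the analogous representations for the shifted functions: the integrand of $F_D(a+1;c+1)$ is the one above multiplied by $t$ (its $(1-t)$ exponent $c-a-1$ is unchanged), while the integrand of $F_D(a;c+1)$ is multiplied by $(1-t)$.

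The second preparatory step is to compute the ratios of normalising constants from $\Gamma(z+1)=z\Gamma(z)$: I expect $L(a+1,c+1)=\frac{c}{a}L(a,c)$ and $L(a,c+1)=\frac{c}{c-a}L(a,c)$. Then for (\ref{first}) the two coefficients collapse, $aL(a+1,c+1)=cL(a,c)$ and $(c-a)L(a,c+1)=cL(a,c)$, so that the left-hand side equals
\[
cL(a,c)\int_0^1\bigl[t+(1-t)\bigr]t^{a-1}(1-t)^{c-a-1}P(t)\,dt;
\]
since $t+(1-t)=1$ this is exactly $cF_D(a;c)$.

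For (\ref{second}) and (\ref{third}) I would observe that lowering $b_i$ to $b_i-1$ (respectively adjoining a new variable $p$ with parameter $-1$, so that the corresponding exponent is $\alpha_{N+1}=-(-1)=1$) leaves $a$ and $c$, hence $L(a,c)$, unchanged, and merely multiplies the integrand by $(1-x_it)$ (respectively by $(1-pt)$). Using $aL(a+1,c+1)=cL(a,c)$, the term $ax_iF_D(a+1;c+1)$ contributes $cL(a,c)\int_0^1 x_it\cdot t^{a-1}(1-t)^{c-a-1}P\,dt$, which precisely cancels the $-x_it$ part of the factor $(1-x_it)=1-x_it$ appearing in $cF_D(a;b_i-1)$; what remains is $cL(a,c)\int_0^1 t^{a-1}(1-t)^{c-a-1}P\,dt=cF_D(a;c)$. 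The argument for (\ref{third}) is verbatim the same with $x_i$ replaced by $p$ and the last symbol carrying the extra variable $p$ and parameter $-1$.

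The computations are routine, so there is no genuine obstacle; the only points needing care are the bookkeeping of the gamma-constants and the observation that nowhere is $c=b_1+\dots+b_N$ invoked, so the relations hold under the convergence hypotheses of (\ref{Lur2}) alone. The restriction $p\notin\{1,x_1,\dots,x_N\}$ in (\ref{third}) serves only to keep $p$ a genuine new variable and the resulting symbol nondegenerate; it plays no role in the integral manipulation. Finally, to assert the identities beyond the range $c>a>0$, $|x_i|<1$ where the integral representation is literally valid, I would note that both sides are analytic in the parameters and extend by analytic continuation, exactly as remarked after (\ref{Lur2}).
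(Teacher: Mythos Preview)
Your proof is correct and follows exactly the paper's approach: the paper's proof consists precisely of inserting the trivial factors $t+(1-t)$, $x_it+(1-x_it)$, and $pt+(1-pt)$ under the integral sign in the Euler representation and reading off the three identities. You have simply made the bookkeeping with the normalising constants $L(a,c)$ explicit, which the paper leaves implicit.
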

\begin {proof}
Multiplying the expression under  the integral sign in (\ref{intj}) by $[t+(1-t)]$ yields the following identity:
\begin{equation}\label{fj1} J(\beta_1+1, \beta_2 ; \alpha _1, \ldots , \alpha_n; x _1, \ldots , x_n) +J(\beta_1, \beta_2+1 ; \alpha _1, \ldots , \alpha_n ; x _1, \ldots , x_n) \end{equation}
\[=J(\beta_1, \beta_2 ; \alpha _1, \ldots , \alpha_n; x _1, \ldots , x_n). \]
(\ref{first}) follows from (\ref{fj1}).

Similarly multiplying the expression under the integral sign in the formula (\ref{intj}) by $[x_it+(1-x_it)]$ (resp. $[pt+(1-pt)]$)
leads to the formula (\ref{second}) (resp. (\ref{third})).

\end{proof}

\begin{prop} We have the following identities:
\begin{equation}\label{reldiff}
  (c-1)F_D^{(N)}\left(\begin{matrix}a-1,&b_1,\dots , b_N \\  & c-1  \end{matrix}\mid x_1,\dots , x_N \right) -\end{equation}
\[(c-1)F_D^{(N)}\left(\begin{matrix}a,&b_1,\dots , b_N \\  & c-1  \end{matrix}\mid x_1,\dots , x_N \right) +\] 
\[  \sum_j b_jx_j
F_D^{(N)}\left(\begin{matrix}a,&b_1,\dots , b_j+1, b_N \\  & c  \end{matrix}\mid x_1,\dots , x_N \right)=0. \] 
\end{prop}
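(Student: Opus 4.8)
The plan is to derive (\ref{reldiff}) from the fundamental theorem of calculus applied to the integrand of the Euler representation (\ref{Lur2}), in the same spirit that the contiguous relations (\ref{first})--(\ref{third}) were produced by multiplying that integrand by $1=[t+(1-t)]$ and its variants. Concretely, I would set
\[ K(t)=t^{a-1}(1-t)^{c-a-1}\prod_{i=1}^N(1-x_it)^{-b_i}, \]
the integrand in (\ref{Lur2}) with $\beta_1=a-1$, $\beta_2=c-a-1$, $\alpha_i=-b_i$ as in (\ref{integralL}). Under the convergence hypotheses, strengthened to $a>1$ and $c-a>1$, the boundary values $K(0)$ and $K(1)$ both vanish, so that integrating the total derivative gives $\int_0^1 K'(t)\,dt=K(1)-K(0)=0$.

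The key step is to compute $K'(t)$ by logarithmic differentiation. Since
\[ \frac{K'(t)}{K(t)}=\frac{a-1}{t}-\frac{c-a-1}{1-t}+\sum_{i=1}^N\frac{b_ix_i}{1-x_it}, \]
integrating termwise over $[0,1]$ yields the $J$-level identity
\[ (a-1)\,J(a-2,c-a-1)-(c-a-1)\,J(a-1,c-a-2)+\sum_{i=1}^N b_ix_i\,J_i=0, \]
where the first two integrals keep the unchanged exponents $\alpha_j=-b_j$, while in $J_i$ the single exponent $\alpha_i$ is lowered to $-(b_i+1)$ (that is, $J_i=\int_0^1 K(t)/(1-x_it)\,dt$), all remaining data being as in (\ref{intj}). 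By (\ref{Lur2}) each of the three integrals is a Lauricella function divided by its beta prefactor $L$: the first equals $F_D^{(N)}(a-1;c-1)/L_1$, the second $F_D^{(N)}(a;c-1)/L_2$, and $J_i$ equals $F_D^{(N)}(a;\ldots,b_i+1,\ldots;c)/L_3$, with $L_1=\Gamma(c-1)/[\Gamma(a-1)\Gamma(c-a)]$, $L_2=\Gamma(c-1)/[\Gamma(a)\Gamma(c-a-1)]$ and $L_3=\Gamma(c)/[\Gamma(a)\Gamma(c-a)]$.

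The only genuinely delicate point, and the step I expect to be the main obstacle, is the bookkeeping of these three prefactors: one must verify that multiplying each term by the reciprocal of its prefactor converts the \emph{non-uniform} coefficients $a-1$ and $c-a-1$ into the \emph{uniform} coefficient $c-1$ demanded by (\ref{reldiff}). Here I would invoke the functional equations $\Gamma(a)=(a-1)\Gamma(a-1)$, $\Gamma(c-a)=(c-a-1)\Gamma(c-a-1)$ and $\Gamma(c)=(c-1)\Gamma(c-1)$. The first two collapse the coefficients of the first two terms to the common value $\Gamma(a)\Gamma(c-a)/\Gamma(c-1)$, while the third turns $1/L_3$ into $\tfrac{1}{c-1}\cdot\Gamma(a)\Gamma(c-a)/\Gamma(c-1)$; clearing the common factor $\Gamma(a)\Gamma(c-a)/\Gamma(c-1)$ and multiplying through by $c-1$ produces exactly (\ref{reldiff}). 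As with the preceding propositions, the identity is first established on the convergence range and then extends to all admissible parameters by analytic continuation.
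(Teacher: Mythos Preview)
Your proposal is correct and follows essentially the same approach as the paper: differentiate the integrand $K(t)$ of the Euler representation, integrate the resulting identity over $[0,1]$ using $K(0)=K(1)=0$, obtain the $J$-level relation (which is precisely the paper's identity (\ref{diff}) with $\beta_1=a-1$, $\beta_2=c-a-1$, $\alpha_i=-b_i$), and then translate back to $F_D^{(N)}$ via (\ref{Lur2}). Your write-up is in fact more detailed than the paper's, which simply states ``(\ref{Lur2}) and (\ref{diff}) give (\ref{reldiff})'' without spelling out the gamma-function bookkeeping that you carry through explicitly.
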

\begin {proof}
Differentiating the expression under  the integral sign in (\ref{intj}) one obtains the following identity:

\begin{equation}\label{diff} \beta_1 J(\beta_1-1, \beta_2 ; \alpha _1, \ldots , \alpha_n; x _1, \ldots , x_n)-\beta_2 J(\beta_1, \beta_2-1 ; \alpha _1, \ldots , \alpha_n; x _1, \ldots , x_n)-  \end{equation}
\[ \sum _j \alpha _j x_j J(\beta_1, \beta_2 ; \alpha _1, \ldots ,\alpha_j-1, \ldots  \alpha_n; x _1, \ldots , x_n)=0. \] 

(\ref{Lur2}) and (\ref{diff}) give (\ref{reldiff}).
\end{proof}

\section{Deriving  relations for Lauricella  $F_D$ functions}

In this section we derive the relations described in the Introduction. These relations involve changes of one exponent in the integral representation (2.2).

Let $K(t)$ be given by (\ref{integralL}) and
\be\label{P}
P(t)=t(1-t){\prod}_{i=1}^{N}(1-x_it), \quad K_1(t)=P(t)K(t).
\ee

Consider the integrals of the form:
\be\label{intp}
I_{n}={\int}_0^1t^nK(t)dt  \qquad  n\in {\mathbb Z},  \quad n\geq 0
\ee
and 
\be\label{intt}
I_{n,p}={\int}_0^1 u^nK(t)dt, \quad  n\in {\mathbb Z}, 
\ee
where $u=1-pt.$

Observe that 
\[ I_0=J({\beta}_1,{\beta}_2,{\alpha}_1,\dots {\alpha}_n) \]
for $ {\beta}_1=a-1, {\beta}_2=c-a-1$ and
${\alpha}_i=-b_{i}$. 

The integrals (\ref{intp}) and (\ref{intt}) are closely related to the Lauricella functions $F^{k}_D$ as from the integral 
representation (\ref{Lur2}) we see that:

\begin{enumerate}
\item[]
\be\label{in} I_n=B(a+n, c-a) F_D^{(N)}\left(\begin{matrix}a+n,&b_1,\dots , b_N \\  & c+n  \end{matrix}\mid x_1,\dots , x_N \right) \ee
for $c>a,$

\item[]
\be I_{n, p}=B(a, c-a) F_D^{(N+1)}\left(\begin{matrix}a,&b_1,\dots  b_N, -n \\  & c  \end{matrix}\mid x_1, \ldots x_N, p \right) \ee
for $p\neq  x_1, \ldots , x_N, p<1,$

\item[]

\be I_{n,1}=B(a, c+n-a) F_D^{(N)}\left(\begin{matrix}a,&b_1,\dots , b_N \\  & c+n  \end{matrix}\mid x_1,\dots , x_N \right) \ee
for $c+n>a,$
\item[]
\be\label{in1} I_{n, x_i}=B(a, c-a) F_D^{(N)}\left(\begin{matrix}a,&b_1,\dots ,b_i-n, \ldots , b_N \\  & c  \end{matrix}\mid x_1, \ldots x_N \right). \ee

\end{enumerate}

\subsection{Relevant derivatives}

The starting point for obtaining the desired identities for the Lauricella function $F_D^N$ are two suitably chosen derivatives. 
The first one is the following:

\be\label{poly1}
\frac{d}{dt}[t^nK_1(t)]=[nt^{n-1}P(t)+t^nW(t)]K(t), \ee 
where $P(t)$ is given by (\ref{P}) and

\be\label{pw1} 
 W(t)=({\beta}_1+1)(1-t){\prod}_{i=1}^{N}(1-x_{j}t) -\ee
\[ ({\beta}_2+1)t {\prod}_{j=1}^{N}(1-x_{j}t) -\sum_l ({\alpha}_l+1)x_lt(1-t){\prod}_{j\neq l}^{N}(1-x_{j}t)\]

Express $P(t)$ and $W(t)$ in the following way:
\be\label{PW1}
P(t)={\sum}_{k=1}^{N+2}d_{k}t^{k}, \quad W(t)={\sum}_{k=0}^{N+1}e_{k}t^{k}
\ee \label{wed}
\[ d_k=d_k(x_1,\ldots , x_N), \quad  e_k=e_k(x_1,\ldots , x_N)\; . \]

The second considered derivative is similar to the first one:
\be\label{np1}
\frac{d}{dt}[u^nK_1(t)]=[-npu^{n-1}P(t)+u^nW(t)]K(t),    \quad u=(1-pt)\; .\ee  

Representing $P(t)$ and $W(t)$ by means of the variable $u=1-pt$ we obtain the following formulas:
\be\label{ttt} t=-\frac{1}{p}(u-1), \quad 1-t=\frac{1}{p}(u+p-1), \quad 1-x_jt=\frac{x_j}{p}\left(u+\frac{p-x_j}{x_j}\right), \;  \ee
\be\label{PPP} P(t)=\frac{-x_1\cdot \ldots \cdot x_N}{p^{N+2}}\left[ (u-1)(u+p-1) \prod _{j=1}^N \left( u+\frac{p-x_j}{x_j}\right)\right] . \ee

Thus

\begin{align}\label{wup} 
 W(t)= \frac{1}{p^{N+1}} x_1\cdot \ldots \cdot x_N 
 \cdot \left[(\beta_1+1)(u+p-1) \prod _{j=1}^N \left( u+\frac{p-x_j}{x_j}\right) +\right. \\
\left (\beta_2+1) (u-1) \prod _{j=1}^N \left( u+\frac{p-x_j}{x_j}\right) +\right.\nonumber\\
 \left. \sum_l(\alpha_l+1)x_l (u-1)(u+p-1) \prod _{j\neq l}\left( u+\frac{p-x_j}{x_j}\right)  \right]\nonumber .
\end{align}

Let
\be \label{np11} P(t)= \sum_{k=0}^{N+2} d_k (p) u^k, \quad   W(t)=\sum_{k=0}^{N+1} e_k (p)u^k ,\ee
\[ d_k(p)=d_k(p, x_1,\ldots , x_N), \quad  e_k(p)=e_k(p, x_1,\ldots , x_N) .\]

We have the following
\begin{thm}\label{main}
The following identities hold true:
\begin{enumerate}
\item[$\bullet$]
for the integrals $I_n$
\be \label{calbp} \sum _{k=0}^{N+1} (nd_{k+1} +e_k) I_{k+n}=0, \ee
\item[$\bullet$]
for the integrals $I_{n, p}$ where $p\neq x_1, \ldots , x_N, p<1$
\be\label{calpol}
{\sum}_{k=0}^{N+2}[-npd_k(p)+e_{k-1}(p)]I_{k+n-1,p}=0,
\ee
\item[$\bullet$]
for the integrals $I_{n, p}$ where $p=1, x_1, \ldots , x_N$
\be \label{calpol0}  \sum _{k=0}^{N+1} (-npd_{k+1}(p) +e_k(p)) I_{k+n,p}=0. \ee
\end{enumerate}
The expressions
\[ d_k=d_k(x_1, \ldots x_N), \quad  e_k=e_k(x_1, \ldots x_N) \]
\[ d_k(p)=d_k(p, x_1, \ldots x_N), \quad  e_k(p)=e_k(p, x_1, \ldots x_N) \]
are the coefficients of expansions (\ref{PW1}) and (\ref{np11}).
They are polynomial in  $x_1, \ldots x_N$, and rational in  $p.$
\end{thm}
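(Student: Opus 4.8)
The plan is to integrate each of the two derivative identities (\ref{poly1}) and (\ref{np1}) over $[0,1]$ and to use that the resulting boundary terms vanish. The key observation is that the auxiliary polynomial $P(t)=t(1-t)\prod_{i=1}^N(1-x_it)$ raises each exponent in $K(t)$ by one, so that
\[ K_1(t)=P(t)K(t)=t^{\beta_1+1}(1-t)^{\beta_2+1}\prod_{i=1}^N(1-x_it)^{\alpha_i+1}. \]
Under the standing hypotheses $\beta_1+1=a>0$ and $\beta_2+1=c-a>0$, the factors $t^{\beta_1+1}$ and $(1-t)^{\beta_2+1}$ force $K_1$ to vanish at both $t=0$ and $t=1$. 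Hence $t^nK_1(t)$ and $u^nK_1(t)$ vanish at the endpoints for the admissible $n$, and the integral of each total derivative is $0$.

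For the first relation, integrating (\ref{poly1}) gives
\[ 0=\int_0^1\frac{d}{dt}\bigl[t^nK_1(t)\bigr]\,dt=\int_0^1\bigl[nt^{n-1}P(t)+t^nW(t)\bigr]K(t)\,dt. \]
Inserting the expansions (\ref{PW1}) and using $\int_0^1 t^mK(t)\,dt=I_m$ yields
\[ 0=\sum_{k=1}^{N+2}nd_k\,I_{n+k-1}+\sum_{k=0}^{N+1}e_k\,I_{n+k}; \]
shifting the first sum so that $d_k$ is replaced by $d_{k+1}$ and collecting the coefficient of each $I_{n+k}$ gives (\ref{calbp}). The case $p<1$, $p\notin\{x_1,\dots,x_N\}$ is identical starting from (\ref{np1}): since here $u=1-pt$ is bounded away from $0$ on $[0,1]$, integration produces
\[ 0=\sum_{k=0}^{N+2}\bigl(-np\,d_k(p)\bigr)I_{n+k-1,p}+\sum_{k=0}^{N+1}e_k(p)\,I_{n+k,p}, \]
and collecting the coefficient of each $I_{n+k-1,p}$ (so that $e_k$ is replaced by $e_{k-1}$) gives (\ref{calpol}).

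The only genuine subtlety, and what I expect to be the main point, is the special case $p\in\{1,x_1,\dots,x_N\}$. Here the $u$-expansion of $P$ degenerates: in (\ref{PPP}) the choice $p=1$ turns the factor $(u+p-1)$ into $u$, and $p=x_i$ turns $\bigl(u+\tfrac{p-x_i}{x_i}\bigr)$ into $u$, so in either case $P$ acquires an overall factor $u$ and its constant coefficient vanishes, $d_0(p)=0$. Therefore the $I_{n-1,p}$ term drops out of the previous identity, the first sum effectively begins at $k=1$, and shifting it (so that $d_k$ becomes $d_{k+1}$) collapses the relation to (\ref{calpol0}). The final assertion on the coefficients then follows by inspection: the $d_k,e_k$ of (\ref{PW1}) are polynomials in $x_1,\dots,x_N$, while in (\ref{PPP})--(\ref{wup}) the denominators $x_j$ inside the products cancel against the prefactor $x_1\cdots x_N$, leaving $d_k(p)$ and $e_k(p)$ as polynomials in $x_1,\dots,x_N$ divided only by a power of $p$; thus they are polynomial in the $x_i$ and rational in $p$. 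A secondary care-point throughout is to pin down the exact range of $n$ for which the boundary terms vanish---for instance when $p=1$ and $n<0$ the factor $u^n=(1-t)^n$ is singular at $t=1$, and one must verify that the zero of $K_1$ of order $\beta_2+1$ dominates it.
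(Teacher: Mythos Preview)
Your proposal is correct and follows the same approach as the paper: integrate the derivative identities (\ref{poly1}) and (\ref{np1}) over $[0,1]$, use vanishing of the boundary terms, and expand via (\ref{PW1}) and (\ref{np11}). Your treatment is in fact more careful than the paper's terse proof---you correctly attribute the vanishing at the endpoints to $K_1$ rather than to $K$, and you spell out explicitly why $d_0(p)=0$ when $p\in\{1,x_1,\dots,x_N\}$, which the paper leaves implicit (it is only visible later in Lemmas~\ref{a11} and~\ref{akxi}).
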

\begin{proof}
The identities  (\ref{calbp})-(\ref{calpol})  follow from the equalities (\ref{poly1}),   (\ref{PW1}), (\ref{np1}) and (\ref{np11}). 
Integrating both sides of  (\ref{poly1}) and (\ref{np1}) from $0$ to $1$ and taking into account that $K(0)=K(1)=0$ we obtain 
the above identities. The dependence  of the coefficients 
on  $x_1, \ldots x_N$ and $p$  will become   clear in the next section, where we  give the precise algorithm of determining them.
\end{proof}
\begin{thm}\label{main1}
Let ${\beta}_1=a-1,\, {\beta}_2=c-a-1$ and ${\alpha}_i=-b_i.$
The following identities for the Lauricella $F_D$ functions hold true:
\begin{enumerate}
\item[$\bullet$] for $n\ge 0$ and $c>a>0$
\begin{multline} \label{relbp} \sum _{k=0}^{N+1} (nd_{k+1} +e_k)B(a+k+n,c-a)\cdot \\F_D^{(N)}\left(\begin{matrix}a+k+n,&b_1,\dots , b_N \\  & c+k+n  \end{matrix}\mid x_1,\dots , x_N \right)=0, \end{multline}
where the coefficients $d_k$  ($e_k$ resp.) are given by Lemma \ref{ak} (Lemma \ref{bk} resp.).

\vspace{3mm}

\item[$\bullet$]
for $n<0,$ $c>a>0$ and $p\neq x_1, \ldots , x_N, p<1$
\begin{multline}\label{relpol}
{\sum}_{k=0}^{N+2}[-npd_k(p)+e_{k-1}(p)] F_D^{(N+1)}\left(\begin{matrix}a,&b_1,\dots  b_N, 1-k-n \\  & c  \end{matrix}\mid x_1, \ldots x_N, p \right)=0,
\end{multline}
where the coefficients $d_k(p)$  ($e_k(p)$ resp.) are given by Lemma \ref{ak1} (Lemma \ref{bk1} resp.).

\vspace{3mm}

\item[$\bullet$]
for $p=1$ and $c+n>a>0$
\begin{multline} \label{relpol0}  \sum _{k=0}^{N+1} (-npd_{k+1}(p) +e_k(p)) B(a, c+k+n-a)\cdot \\F_D^{(N)}\left(\begin{matrix}a,&b_1,\dots , b_N \\  & c+n+k  \end{matrix}\mid x_1,\dots , x_N \right) =0. \end{multline}
where the coefficients $d_k(p)$  ($e_k(p)$ resp.) are given by Lemma \ref{a11} (Lemma \ref{bk11} resp.).

\vspace{3mm}

\item[$\bullet$]
for $p=x_1,\dots, x_N$, $c>a$ 
\begin{multline} \label{relpol01}  \sum _{k=0}^{N+1} (-npd_{k+1}(p) +e_k(p)) \cdot F_D^{(N)}\left(\begin{matrix}a,&b_1,\dots, b_i-n-k,\dots , b_N \\  & c  \end{matrix}\mid x_1,\dots , x_N \right) =0. \end{multline}
where the coefficients $d_k(p)$  ($e_k(p)$ resp.) are given by Lemma \ref{akxi} (Lemma \ref{bk1xi} resp.).

\end{enumerate}

\end{thm}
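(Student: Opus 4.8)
The plan is to obtain each of the four identities by feeding the corresponding relation of Theorem~\ref{main} through the integral-to-function dictionary recorded in (\ref{in})--(\ref{in1}). Theorem~\ref{main} already furnishes the vanishing of the appropriate linear combinations of the integrals $I_n$ and $I_{n,p}$; the only remaining task is to replace each integral by its Lauricella counterpart and to keep track of the Beta-function prefactors, noting in each case whether that prefactor depends on the summation index $k$, in which case it survives inside the sum, or is independent of $k$, in which case it factors out and cancels.

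For (\ref{relbp}) I would start from (\ref{calbp}) and substitute $I_{k+n}=B(a+k+n,c-a)\,F_D^{(N)}$ from (\ref{in}), with the parameters shifted to the pair $(a+k+n,\,c+k+n)$. The prefactor $B(a+k+n,c-a)$ carries the index $k$ and therefore stays inside the sum. The hypothesis $n\ge 0$ is exactly what makes each index $k+n$ admissible in the definition (\ref{intp}), while $c>a>0$ secures the Euler representation (\ref{Lur2}) for every shifted pair, since $c+k+n>a+k+n>0$. The coefficients are then the ones evaluated in Lemma~\ref{ak} and Lemma~\ref{bk}.

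For (\ref{relpol}) I would begin with (\ref{calpol}) and apply the dictionary entry for $I_{m,p}$, namely $I_{m,p}=B(a,c-a)\,F_D^{(N+1)}$ with the adjoined lower parameter equal to $-m$. Taking $m=k+n-1$ converts $-m$ into $1-k-n$, matching the displayed parameter; here the prefactor $B(a,c-a)$ is independent of $k$, so it factors out of the entire sum and is cancelled. The sign condition $n<0$ places the indices $m=k+n-1$ in the regime to which this entry applies, and $p\neq x_1,\dots,x_N$ together with $p<1$ are precisely the restrictions under which the $(N+1)$-variable representation is valid; the coefficients come from Lemma~\ref{ak1} and Lemma~\ref{bk1}.

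The last two identities both issue from the single relation (\ref{calpol0}), and the one point that genuinely demands attention is that they invoke different dictionary entries according to the value of $p$, producing different prefactor behaviour. For $p=1$ I would substitute $I_{k+n,1}=B(a,c+k+n-a)\,F_D^{(N)}$ with raised lower parameter $c+k+n$; the Beta factor again carries the index $k$ and must remain inside the sum, and $c+n>a>0$ guarantees that the second Beta argument is positive for the smallest occurring index $k=0$, hence for all $k\ge 0$, yielding (\ref{relpol0}) with coefficients from Lemma~\ref{a11} and Lemma~\ref{bk11}. For $p=x_i$ I would instead use $I_{k+n,x_i}=B(a,c-a)\,F_D^{(N)}$ with $b_i$ lowered to $b_i-n-k$; now the constant prefactor $B(a,c-a)$ factors out and disappears, leaving (\ref{relpol01}) with coefficients from Lemma~\ref{akxi} and Lemma~\ref{bk1xi}. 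In every case the analytic substance has already been discharged in Theorem~\ref{main}, so beyond this tracking of prefactors---together with the routine verification, via the referenced Lemmas, of the explicit forms of $d_k,e_k,d_k(p),e_k(p)$---I anticipate no real obstacle.
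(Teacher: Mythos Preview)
Your proposal is correct and follows exactly the approach of the paper, whose proof consists of the single sentence ``Substitute formulas (\ref{in})--(\ref{in1}) into the corresponding formulas of Theorem~\ref{main}.'' You have merely unpacked that instruction case by case, tracking which Beta prefactors depend on $k$ and which do not, and recording why the parameter hypotheses legitimise the Euler representations used; none of this goes beyond the paper's route.
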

\begin{proof}
Substitute  formulas (\ref{in})-(\ref{in1}) into the corresponding formulas of Theorem \ref{main}.
\end{proof}

\section{Algorithm for determining coefficients.}

In this section   we  describe coefficients $d_k, d_k(p),\,\, k=1,\dots N+2$ and 
$e_k, e_k(p), \,\, k=0,\dots ,N+1$  as functions of the variables $x_1,\dots ,x_N, p.$
\begin{dfn}
By ${\sigma}_l(y_1, \dots , y_k)$ we will denote the $l$-th elementary symmetric polynomial in $k$ variables. 
We assume that ${\sigma}_0(y_1, \dots , y_k):=1$ and ${\sigma}_l(y_1, \dots , y_k):=0$ if $l>k$ or $l< 0$ i.e.
\be\label{els}
 {\sigma}_l(y_1, \dots , y_k)=\begin{cases}
 {\sum}_{1\leq {i_1}< \dots < {i_l}\leq k}y_{i_1}\cdot\dots\cdot y_{i_l} \qquad\hfill 0<l\leq k\\
 1\hfill l=0, \, k\geq 0 \\
 0 \hfill l<0 \quad {\mathrm{or}} \quad l>k.
 \end{cases}
 \ee 
 Notice that for $k=0$ the set of variables is empty and we adopt the convention that in this case ${\sigma}_0=1$ as well.
\end{dfn}
The following well-known  lemma will be useful:
\begin{lem}\label{symmetric}
\be\label{symm1}
{\sigma}_l(y,y_1, \dots , y_k)=y{\sigma}_{l-1}(y_1, \dots , y_k) + {\sigma}_l(y_1, \dots , y_k).
\ee\qed

\end{lem}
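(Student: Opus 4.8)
The final statement to prove is Lemma~\ref{symmetric}, the recursion
\[
{\sigma}_l(y,y_1, \dots , y_k)=y{\sigma}_{l-1}(y_1, \dots , y_k) + {\sigma}_l(y_1, \dots , y_k),
\]
so let me sketch how I would establish this elementary but foundational identity.

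The plan is to prove the recursion by partitioning the defining sum for ${\sigma}_l(y,y_1,\dots,y_k)$ according to whether or not the distinguished variable $y$ appears in each monomial. First I would treat the generic range $0 < l \le k$, where the defining formula in (\ref{els}) applies directly to both sides. Writing out ${\sigma}_l(y,y_1,\dots,y_k)$ as the sum of all products of $l$ distinct variables chosen from the $(k+1)$-element set $\{y,y_1,\dots,y_k\}$, I would split this collection of monomials into two disjoint classes: those that contain the factor $y$ and those that do not. The monomials not containing $y$ are precisely the products of $l$ distinct variables from $\{y_1,\dots,y_k\}$, whose sum is ${\sigma}_l(y_1,\dots,y_k)$ by definition. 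The monomials containing $y$ are exactly $y$ times a product of $l-1$ distinct variables from $\{y_1,\dots,y_k\}$, and summing these gives $y\,{\sigma}_{l-1}(y_1,\dots,y_k)$. Adding the two classes yields the claimed identity.

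The only real care needed is in the boundary and degenerate cases, which is where the conventions adopted in the Definition must be checked to be internally consistent. I would verify the identity separately for $l=0$, for $l=k+1$, and for the out-of-range values $l<0$ and $l>k+1$, using the stipulations that ${\sigma}_0:=1$, that ${\sigma}_l:=0$ when $l<0$ or $l$ exceeds the number of variables, and the empty-set convention ${\sigma}_0=1$. For instance, when $l=0$ both sides reduce to $1$ since $y\,{\sigma}_{-1}=0$; when $l=k+1$ the left side is $y\cdot y_1\cdots y_k$, matching $y\,{\sigma}_k(y_1,\dots,y_k)+{\sigma}_{k+1}(y_1,\dots,y_k)=y\,\sigma_k+0$; and for $l$ outside $[0,k+1]$ every term vanishes. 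These checks confirm that the combinatorial split remains valid at the edges once the boundary conventions are in force.

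I do not anticipate a genuine obstacle here, as the statement is the standard Pascal-type recursion for elementary symmetric polynomials; the lemma is labeled ``well-known'' for exactly this reason. The mild subtlety, and the only place a reader could trip, is ensuring that the bookkeeping of the zero and unit conventions at the boundaries is airtight, so that the single identity covers all integer values of $l$ uniformly rather than requiring a patchwork of cases in later applications. An equivalent and perhaps cleaner route would be to observe that the elementary symmetric polynomials are the coefficients in the generating-function factorization, so that multiplying $\prod_{i=1}^k (1+y_i T)$ by the extra factor $(1+yT)$ and comparing the coefficient of $T^l$ on both sides reproduces the recursion instantly; I would mention this generating-function argument as a compact alternative that automatically encodes the boundary conventions.
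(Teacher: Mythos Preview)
Your proof is correct; the combinatorial split by whether $y$ appears, together with the boundary checks against the conventions in (\ref{els}), is exactly the standard argument and is fully rigorous. In the paper itself the lemma is simply stated as well-known and closed with a \qed\ with no argument given, so your write-up already goes well beyond what the authors supply.
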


\subsection{Determination of the coefficients $d_k$, $e_k$.}

\begin{lem}\label{ak}
The coefficients $d_k=d_k(x_1,\dots ,x_N)$ are given by the following formulas:
\begin{equation*}
 d_k
=
(-1)^{k-1}[{\sigma}_{k-1}(x_1,\dots ,x_N)+{\sigma}_{k-2}(x_1,\dots, x_N)], \\
 \hfill k=0,\dots N+2\\
\end{equation*}
\end{lem}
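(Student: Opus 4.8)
The plan is to expand $P(t)=t(1-t)\prod_{i=1}^{N}(1-x_it)$ explicitly and simply read off the coefficient of $t^k$. The essential input is the classical generating identity for the elementary symmetric polynomials,
\[
\prod_{i=1}^{N}(1-x_it)=\sum_{j=0}^{N}(-1)^j{\sigma}_j(x_1,\dots,x_N)\,t^j,
\]
which identifies each coefficient of the product with an elementary symmetric polynomial. This follows by a short induction on $N$ from the recursion of Lemma \ref{symmetric}: passing from $N-1$ to $N$ variables multiplies the product by $(1-x_Nt)$, and matching powers of $t$ reproduces exactly ${\sigma}_j(y,y_1,\dots,y_{N-1})=y{\sigma}_{j-1}+{\sigma}_j$.

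Next I would multiply through by $t(1-t)=t-t^2$ and split into two sums. Abbreviating ${\sigma}_j={\sigma}_j(x_1,\dots,x_N)$, this gives
\[
P(t)=\sum_{j=0}^{N}(-1)^j{\sigma}_j\,t^{j+1}-\sum_{j=0}^{N}(-1)^j{\sigma}_j\,t^{j+2}.
\]
Re-indexing the first sum by $k=j+1$ and the second by $k=j+2$, and noting that $(-1)^{k-2}=(-1)^k$ so that the minus sign in front of the second sum turns its coefficient into $(-1)^{k-1}$ as well, both contributions to $t^k$ acquire the common prefactor $(-1)^{k-1}$. Collecting them yields
\[
P(t)=\sum_{k=0}^{N+2}(-1)^{k-1}\bigl[{\sigma}_{k-1}+{\sigma}_{k-2}\bigr]t^k,
\]
and comparison with the expansion (\ref{PW1}) gives the asserted formula for $d_k$.

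The only delicate point—bookkeeping rather than a real obstacle—is checking that this single closed expression is correct across the entire range $k=0,\dots,N+2$, including the endpoints, where the raw re-indexing produced sums over narrower ranges. Here the conventions ${\sigma}_l=0$ for $l<0$ or $l>N$ and ${\sigma}_0=1$ fixed in (\ref{els}) do exactly the required work: for $k=0$ one obtains $d_0=0$, consistent with $P(t)$ having no constant term; for $k=1$ one obtains $d_1={\sigma}_0=1$; and for $k=N+2$ one obtains $d_{N+2}=(-1)^{N+1}{\sigma}_N$, which matches the leading term $-t^2\cdot(-1)^N{\sigma}_N t^N$ of $P(t)$. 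Verifying these extreme cases directly confirms that no separate treatment of the boundary indices is necessary, completing the argument.
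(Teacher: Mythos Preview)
Your proof is correct and follows essentially the same approach as the paper: expand $P(t)$ using the generating identity for elementary symmetric polynomials and read off the coefficient of $t^k$. The only organizational difference is that the paper absorbs the factor $(1-t)$ into the product to obtain $\sigma_{k-1}(1,x_1,\dots,x_N)$ and then invokes Lemma~\ref{symmetric} with $y=1$ to split it, whereas you keep $(1-t)$ separate, multiply by $t-t^2$, and re-index---the net effect is identical.
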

\begin{proof}
Follows from (\ref{P}) and the equality $(1-t){\prod}_{i=1}^N(1-x_it)={\sum}_{i=0}^{N+1}(-1)^i{\sigma}_i(1, x_1,\dots ,x_N)t^i$
and Lemma \ref{symmetric}.
\end{proof}

\begin{lem}\label{bk}
The coefficients $e_k=e_k(x_1,\dots, x_N),\,\, k=0,\dots N+1$ are given by the following formulas:

\be e_k= ({\beta}_1+1) v_k+({\beta}_2+1)w_k+\sum_l ({\alpha}_l+1)x_lz_{k,l} ,\ee
where
\[ v_k=(-1)^{k}[{\sigma}_{k}(x_1,\dots ,x_N)+{\sigma}_{k-1}(x_1,\dots, x_N)],\]
\[ w_k=(-1)^{k}{\sigma}_{k-1}(x_1,\dots, x_N),\]
\[ z_{k,l}=(-1)^{k}[{\sigma}_{k-1}\left( (x_i)_{i\neq l}\right)+{\sigma}_{k-2}\left( (x_i)_{i\neq l}\right)],\]

\end{lem}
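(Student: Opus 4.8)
The plan is to read the three families of coefficients $v_k$, $w_k$, $z_{k,l}$ directly off the three summands of $W(t)$ in (\ref{pw1}), exactly in the spirit of the proof of Lemma \ref{ak}. The only tools needed are the product expansion $\prod_j (1-y_j t)=\sum_i (-1)^i \sigma_i(\{y_j\})\, t^i$ and Lemma \ref{symmetric}, the latter used to absorb the extra factor corresponding to the distinguished variable $1$ coming from the factor $(1-t)$. Comparing with the expansion $W(t)=\sum_{k=0}^{N+1} e_k t^k$ of (\ref{PW1}) then produces the claimed closed forms termwise.

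First I would treat the term $(\beta_1+1)(1-t)\prod_{i=1}^N(1-x_i t)$. Writing $(1-t)\prod_{i=1}^N(1-x_i t)=\sum_{i=0}^{N+1}(-1)^i \sigma_i(1,x_1,\dots,x_N)\, t^i$ and applying Lemma \ref{symmetric} with $y=1$, namely $\sigma_i(1,x_1,\dots,x_N)=\sigma_i(x_1,\dots,x_N)+\sigma_{i-1}(x_1,\dots,x_N)$, the coefficient of $t^k$ in this term is $(\beta_1+1)v_k$ with $v_k=(-1)^k[\sigma_k(x_1,\dots,x_N)+\sigma_{k-1}(x_1,\dots,x_N)]$. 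Next, for $-(\beta_2+1)t\prod_{j=1}^N(1-x_j t)$ I expand $\prod_{j=1}^N(1-x_j t)=\sum_{i=0}^N (-1)^i \sigma_i(x_1,\dots,x_N)\, t^i$ and shift the summation index by one because of the factor $t$; the coefficient of $t^k$ becomes $(\beta_2+1)(-1)^k \sigma_{k-1}(x_1,\dots,x_N)=(\beta_2+1)w_k$, the explicit minus sign in front of the term converting $(-1)^{k-1}$ into $(-1)^k$.

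For the final term $-\sum_l(\alpha_l+1)x_l\, t(1-t)\prod_{j\neq l}(1-x_j t)$ I would apply the same two steps to the reduced variable set $(x_i)_{i\neq l}$: expand $(1-t)\prod_{j\neq l}(1-x_j t)$ in terms of $\sigma_i(1,(x_i)_{i\neq l})$, use Lemma \ref{symmetric} to rewrite each $\sigma_i(1,(x_i)_{i\neq l})$ as $\sigma_i((x_i)_{i\neq l})+\sigma_{i-1}((x_i)_{i\neq l})$, and account for the prefactor $t$ by a one-step index shift, arriving at
\[ z_{k,l}=(-1)^{k}\big[\sigma_{k-1}\big((x_i)_{i\neq l}\big)+\sigma_{k-2}\big((x_i)_{i\neq l}\big)\big]. \]
Summing the three contributions gives $e_k=(\beta_1+1)v_k+(\beta_2+1)w_k+\sum_l(\alpha_l+1)x_l z_{k,l}$, as asserted.

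The computation is entirely routine; the one point requiring care, and hence the main obstacle, is the index bookkeeping at the endpoints $k=0$ and $k=N+1$. There one must check that the conventions $\sigma_l=0$ for $l<0$ or $l>k$, together with $\sigma_0=1$ even on the empty variable set, make the three displayed formulas valid across the full range $k=0,\dots,N+1$, in particular where a shifted $\sigma$ degenerates or where the reduced set $(x_i)_{i\neq l}$ has only $N-1$ entries. This must be verified so that no boundary contribution is silently lost, but it involves no essential difficulty beyond the conventions already fixed in Definition (\ref{els}).
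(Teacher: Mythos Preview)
Your proposal is correct and follows precisely the route indicated by the paper: expand each of the three summands of $W(t)$ in (\ref{pw1}) via $\prod_j(1-y_jt)=\sum_i(-1)^i\sigma_i(\{y_j\})t^i$, apply Lemma~\ref{symmetric} with $y=1$ to handle the factor $(1-t)$, and shift indices for the extra factor $t$. The paper's own proof is just the one-line remark ``Follows from (\ref{poly1}) and similar as in Lemma~\ref{ak} computation,'' so your write-up is in fact a fleshed-out version of exactly that argument.
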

\begin{proof}
Follows from (\ref{poly1}) and similar  as in Lemma \ref{ak} computation.
\end{proof}

\subsection{Determination of the coefficients $d_k (p)$ and $e_k (p)$}

\begin{lem}\label{ak1}
The coefficients $d_k(p)=d_k(p,x_1,\dots ,x_N)$ in formulas (\ref{np11}) where $u=1-pt$ are given by the following formulas:
\be\label{akp}
d_k(p)= \frac{x_1\cdot \ldots \cdot x_N}{p^{N+2}}\left[\sigma _{N+1-k}\left( p-1;\left(\frac{p-x_j}{x_j}\right) _{j=1,\ldots , N}\right) \right. \ee
\[ -\left. \sigma _{N+2-k}\left(p-1;\left(\frac{p-x_j}{x_j}\right) _{j=1,\ldots , N}\right) \right] , \quad k=0, \ldots , N+2 \]
\end{lem}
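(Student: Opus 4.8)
The plan is to read off $d_k(p)$ directly from the already-factored expression (\ref{PPP}) for $P(t)$, by expanding the bracket there as a polynomial in $u$ and extracting the coefficient of $u^k$. First I would introduce the $N+1$ quantities
\[ a_0 = p-1, \qquad a_j = \frac{p-x_j}{x_j} \quad (j=1,\ldots,N), \]
so that the bracket on the right-hand side of (\ref{PPP}) is exactly $(u-1)\prod_{j=0}^{N}(u+a_j)$. In this notation the $N+1$ arguments $a_0,a_1,\ldots,a_N$ are precisely the entries $p-1$ and $\left(\tfrac{p-x_j}{x_j}\right)_{j}$ appearing in the symmetric polynomials of the lemma.

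The key step is the standard Vieta / generating-function identity for elementary symmetric polynomials: for any $N+1$ quantities $a_0,\ldots,a_N$,
\[ \prod_{j=0}^{N}(u+a_j) = \sum_{m=0}^{N+1}\sigma_{N+1-m}(a_0,\ldots,a_N)\,u^m. \]
Abbreviating $\sigma_l := \sigma_l\!\left(p-1;\left(\tfrac{p-x_j}{x_j}\right)_{j=1,\ldots,N}\right)$, I would then multiply this expansion by the remaining factor $(u-1)$ and shift the index in the contribution $u\cdot u^m = u^{m+1}$. Collecting the coefficient of $u^k$ gives $\sigma_{N+2-k}-\sigma_{N+1-k}$, so that
\[ P(t) = \frac{-x_1\cdots x_N}{p^{N+2}}\sum_{k=0}^{N+2}\left(\sigma_{N+2-k}-\sigma_{N+1-k}\right)u^k. \]
Comparing this with the expansion (\ref{np11}) and absorbing the sign into the prefactor $-x_1\cdots x_N/p^{N+2}$ yields exactly the claimed formula
\[ d_k(p) = \frac{x_1\cdots x_N}{p^{N+2}}\left(\sigma_{N+1-k}-\sigma_{N+2-k}\right). \]

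There is no genuine obstacle beyond index bookkeeping; the only point needing care is the behaviour at the two ends of the range $k=0,\ldots,N+2$. At the top end $k=N+2$ one encounters $\sigma_{-1}=0$ and $\sigma_0=1$, which reproduces the leading coefficient $d_{N+2}(p)=-x_1\cdots x_N/p^{N+2}$; at the bottom end $k=0$ one encounters $\sigma_{N+2}=0$ alongside $\sigma_{N+1}$ (the full product of the $N+1$ arguments). Both are exactly the degenerate cases built into the conventions of (\ref{els}), so the single formula holds uniformly over the whole range $0\le k\le N+2$ and the lemma follows.
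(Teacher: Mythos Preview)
Your argument is correct and is essentially the same as the paper's: both start from the factored form (\ref{PPP}) and read off $d_k(p)$ via the Vieta expansion. The only cosmetic difference is that the paper first absorbs the factor $(u-1)$ as an extra argument $-1$ in the symmetric polynomial, obtaining $d_k(p)=\frac{-x_1\cdots x_N}{p^{N+2}}\,\sigma_{N+2-k}\!\left(-1,\,p-1;\left(\tfrac{p-x_j}{x_j}\right)_j\right)$, and then invokes Lemma~\ref{symmetric} with $y=-1$ to split it off, whereas you keep $(u-1)$ separate and perform that same split by the index shift.
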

\begin{proof}
From (\ref{PPP}) we get 
\be d_k(p)=\frac{-x_1\cdot \ldots \cdot x_N}{p^{N+2}}\sigma _{N+2-k}\left(-1, p-1;\left(\frac{p-x_j}{x_j}\right) _{j=1,\ldots , N}\right) .\ee
Now apply Lemma \ref{symmetric} with $y=-1.$
\end{proof}

\begin{lem}\label{bk1}
The coefficients $e_k(p,x_1,\dots ,x_N)$ in formulas (\ref{np11}) where $u=1-pt$ are given by the following formulas:
\be\label{bbkk} e_k(p)=\frac{1}{p^{N+1}} x_1\cdot \ldots \cdot x_N \cdot  \left[ (\beta_1+1)v_k(p)+(\beta_2+1)w_k(p)+\sum_l(\alpha_l+1)x_l z_{k,l}(p) \right]\ee
where
\[ v_k(p)= \sigma _{N+1-k} \left(p-1, \left(\frac{p-x_j}{x_j}\right)_{j=1, \ldots ,N} \right)  ,\]
\[w_k(p)= \sigma _{N+1-k} \left(-1, \left(\frac{p-x_j}{x_j}\right)_{j=1, \ldots ,N} \right) , \]
\[ z_{k,l}(p)=\sigma _{N+1-k}  \left(-1, p-1,  \left(\frac{p-x_j}{x_j}\right)_{j\neq l} \right).\]
\end{lem}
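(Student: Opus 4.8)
The plan is to read off $e_k(p)$ directly as the coefficient of $u^k$ in the expression (\ref{wup}) for $W(t)$, using the elementary fact that the coefficients of a product of monic linear factors are the elementary symmetric polynomials of the roots. Concretely, for any finite family $(a_i)_{i=1}^m$ one has
\[
\prod_{i=1}^m (u+a_i)=\sum_{k=0}^m \sigma_{m-k}(a_1,\ldots,a_m)\,u^k,
\]
so that the coefficient of $u^k$ in such a product is $\sigma_{m-k}$ of the family of roots. This identity is nothing but the iterated form of the recursion of Lemma \ref{symmetric}, and it is the single tool needed here; indeed it is exactly the device already used in Lemma \ref{ak1} to obtain $d_k(p)$ from (\ref{PPP}).

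First I would observe that each of the three summands inside the bracket of (\ref{wup}) is a product of \emph{exactly} $N+1$ monic linear factors in $u$. The first summand $(u+p-1)\prod_{j=1}^N\big(u+\tfrac{p-x_j}{x_j}\big)$ consists of one explicit factor together with $N$ factors from the product; the second summand $(u-1)\prod_{j=1}^N\big(u+\tfrac{p-x_j}{x_j}\big)$ likewise has one explicit factor plus the same $N$ product factors; and, for each $l$, the third summand $(u-1)(u+p-1)\prod_{j\neq l}\big(u+\tfrac{p-x_j}{x_j}\big)$ has two explicit factors plus $N-1$ product factors. In every case the total degree in $u$ is $N+1$, so the coefficient of $u^k$ carries the single common subscript $N+1-k$. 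Applying the displayed identity term by term gives the coefficient of $u^k$ in the first summand as $\sigma_{N+1-k}\big(p-1,(\tfrac{p-x_j}{x_j})_{j}\big)=v_k(p)$, in the second as $\sigma_{N+1-k}\big(-1,(\tfrac{p-x_j}{x_j})_{j}\big)=w_k(p)$, and in the $l$-th piece of the third as $\sigma_{N+1-k}\big(-1,p-1,(\tfrac{p-x_j}{x_j})_{j\neq l}\big)=z_{k,l}(p)$.

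Collecting these three contributions and restoring the common prefactor $\tfrac{1}{p^{N+1}}x_1\cdots x_N$ along with the weights $(\beta_1+1)$, $(\beta_2+1)$ and $(\alpha_l+1)x_l$ recorded in (\ref{wup}) then yields precisely the asserted formula (\ref{bbkk}). The only point requiring care is the bookkeeping of the number of linear factors: one must verify that each of the three summands contributes the same total degree $N+1$, so that $v_k(p)$, $w_k(p)$ and $z_{k,l}(p)$ all appear as elementary symmetric polynomials of the single subscript $N+1-k$. Granting that combinatorial count, the identification is immediate and no further computation is needed. I do not anticipate a genuine obstacle here: the statement is a purely formal coefficient extraction entirely parallel to Lemma \ref{ak1}, the only subtlety being the factor count just described.
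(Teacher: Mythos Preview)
Your argument is correct and is precisely the paper's own proof: the paper simply invokes (\ref{wup}) together with the identity $\prod_{i=1}^M(u+c_i)=\sum_{k=0}^M\sigma_{M-k}(c_1,\dots,c_M)u^k$, and you have spelled out that coefficient extraction in full detail, including the factor count ensuring each summand has degree $N+1$.
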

\begin{proof}
Follows from (\ref{wup}) and  the fact that ${\prod}_{i=1}^M(u+c_i)={\sum}_{k=0}^M{\sigma}_{M-k}(c_1,\dots ,c_M)u^k.$
\end{proof}
\begin{rem}
Notice that using Lemma \ref{symmetric} we can express the coefficients $w_k(p)$ and  $z_{k,l}(p)$ as follows:
\be\label{wkp}
w_k(p)=\sigma _{N+1-k} \left( \left(\frac{p-x_j}{x_j}\right)_{j=1, \ldots ,N} \right)-\sigma _{N-k} \left(\left(\frac{p-x_j}{x_j}\right)_{j=1, \ldots ,N} \right)
\ee
\be
z_{k,l}(p)=\sigma _{N+1-k}  \left( p-1,  \left(\frac{p-x_j}{x_j}\right)_{j\neq l} \right)-\sigma _{N-k}  \left( p-1,  \left(\frac{p-x_j}{x_j}\right)_{j\neq l} \right).
\ee
\end{rem}

\subsection{Determination of coefficients  $d_k (1)$ and $e_k (1)$.}

\begin{lem}\label{a11}
For $p=1$ we obtain:
\be\label{akp1} d_k(1)={x_1\cdot \ldots \cdot x_N}\left[\sigma _{N+1-k}\left( \left(\frac{1-x_j}{x_j}\right) _{j=1,\ldots , N}\right) \right. \ee
\[ -\left. \sigma _{N+2-k}\left(\left(\frac{1-x_j}{x_j}\right) _{j=1,\ldots , N}\right) \right] , \quad k=0, \ldots , N+2 \; .\]
In particular $d_0(x_1,\dots,x_N)=0.$
\end{lem}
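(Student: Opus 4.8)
The plan is to obtain Lemma \ref{a11} directly as the specialization $p=1$ of Lemma \ref{ak1}, so that no new integral evaluation or generating-function computation is required. First I would set $p=1$ in formula (\ref{akp}). The overall prefactor $\frac{x_1\cdots x_N}{p^{N+2}}$ then collapses to $x_1\cdots x_N$, the remaining entries $\frac{p-x_j}{x_j}$ become $\frac{1-x_j}{x_j}$, and the first entry $p-1$ in each elementary symmetric polynomial becomes $0$.

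The only point that genuinely needs justification is that an entry equal to $0$ may be discarded from each $\sigma$. This is precisely Lemma \ref{symmetric} applied with $y=0$: it gives $\sigma_l\left(0;\,(\tfrac{1-x_j}{x_j})_{j}\right)=0\cdot\sigma_{l-1}\left((\tfrac{1-x_j}{x_j})_{j}\right)+\sigma_l\left((\tfrac{1-x_j}{x_j})_{j}\right)=\sigma_l\left((\tfrac{1-x_j}{x_j})_{j}\right)$, and I would apply this for both $l=N+1-k$ and $l=N+2-k$. Substituting these two simplifications into (\ref{akp}) yields exactly the asserted closed form for $d_k(1)$.

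For the final assertion that $d_0$ vanishes, I would simply set $k=0$ in the formula just derived. The two surviving symmetric polynomials are then $\sigma_{N+1}$ and $\sigma_{N+2}$ evaluated at the $N$ quantities $\frac{1-x_j}{x_j}$; since each of these has degree strictly larger than the number $N$ of variables, both are $0$ by the convention in (\ref{els}), whence $d_0(1)=0$. Equivalently, one can read this off from $P(1)=1\cdot(1-1)\cdot\prod_j(1-x_j)=0$, i.e. the constant term of $P$ expanded in powers of $u=1-t$ must vanish. Since the whole argument is a substitution together with one application of Lemma \ref{symmetric}, there is no real obstacle here; the only care required is the bookkeeping of the arguments of the elementary symmetric polynomials, which that lemma handles.
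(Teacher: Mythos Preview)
Your argument is correct and follows exactly the paper's approach: the paper's proof consists of the single sentence ``Put $p=1$ in (\ref{akp}),'' and you have simply spelled out the bookkeeping (the prefactor, the vanishing entry $p-1=0$ removed via Lemma~\ref{symmetric}, and the convention (\ref{els}) for $d_0$) that this substitution entails.
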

\begin{proof}
Put $p=1$ in (\ref{akp}). 
\end{proof}
We have the following specialization of Lemma \ref{bk1}:
\begin{lem}\label{bk11}
For $p=1$  the coefficients $e_k(1)$ are given by the formula (\ref{bbkk}) where
\[ v_k(1)= \sigma _{N+1-k} \left( \left(\frac{1-x_j}{x_j}\right)_{j=1, \ldots ,N} \right)  .\]
\[ w_k(1)=\sigma _{N+1-k} \left( \left(\frac{1-x_j}{x_j}\right)_{j=1, \ldots ,N} \right)-\sigma _{N-k} \left(\left(\frac{1-x_j}{x_j}\right)_{j=1, \ldots ,N} \right) , \] 
\[ z_{k,l}(1)=\sigma _{N+1-k}  \left(  \left(\frac{1-x_j}{x_j}\right)_{j\neq l} \right)-\sigma _{N-k}  \left( \left(\frac{1-x_j}{x_j}\right)_{j\neq l} \right).\]
\end{lem}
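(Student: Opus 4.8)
The plan is to specialize the general formula of Lemma \ref{bk1} at $p=1$, treating the three constituent families $v_k(p)$, $w_k(p)$, $z_{k,l}(p)$ separately. The outer structure of (\ref{bbkk}) is manifestly preserved by the substitution, and its prefactor $\frac{1}{p^{N+1}}x_1\cdot\ldots\cdot x_N$ reduces to $x_1\cdot\ldots\cdot x_N$ since $p^{N+1}=1$; so the only real work is to evaluate the three symmetric polynomials at $p=1$. Under this substitution each ratio $\frac{p-x_j}{x_j}$ becomes $\frac{1-x_j}{x_j}$, while every distinguished entry equal to $p-1$ collapses to $0$.

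First I would handle $v_k$. In Lemma \ref{bk1} one has $v_k(p)=\sigma_{N+1-k}\bigl(p-1,(\tfrac{p-x_j}{x_j})_{j=1,\ldots,N}\bigr)$, so the distinguished first argument is precisely $p-1$. Applying Lemma \ref{symmetric} with $y=p-1$ gives $v_k(p)=(p-1)\,\sigma_{N-k}\bigl((\tfrac{p-x_j}{x_j})_j\bigr)+\sigma_{N+1-k}\bigl((\tfrac{p-x_j}{x_j})_j\bigr)$, and setting $p=1$ annihilates the first summand, leaving exactly $v_k(1)=\sigma_{N+1-k}\bigl((\tfrac{1-x_j}{x_j})_{j=1,\ldots,N}\bigr)$, as claimed.

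For $w_k$ and $z_{k,l}$ I would start from the reformulations recorded in the Remark following Lemma \ref{bk1}, where Lemma \ref{symmetric} with $y=-1$ has already been used to remove the entry $-1$: there $w_k(p)$ and $z_{k,l}(p)$ appear as differences $\sigma_{N+1-k}-\sigma_{N-k}$ of symmetric polynomials whose remaining arguments are the $\frac{p-x_j}{x_j}$ (together with the extra entry $p-1$ in the case of $z_{k,l}$). Setting $p=1$ in the $w_k$ expression is immediate and yields the stated $w_k(1)$. For $z_{k,l}$ the same substitution turns the extra entry $p-1$ into $0$; invoking Lemma \ref{symmetric} once more with $y=0$ (which merely discards a zero argument) removes it from both symmetric polynomials, giving $z_{k,l}(1)=\sigma_{N+1-k}\bigl((\tfrac{1-x_j}{x_j})_{j\neq l}\bigr)-\sigma_{N-k}\bigl((\tfrac{1-x_j}{x_j})_{j\neq l}\bigr)$.

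There is no genuine obstacle: once Lemma \ref{bk1} is in hand the argument is a direct substitution. The only point requiring mild care is the bookkeeping at the extreme indices $k$, where some of the symmetric polynomials have negative degree or are taken over the empty variable set; these are all handled uniformly by the conventions $\sigma_l=0$ for $l<0$ or $l$ exceeding the number of variables, and $\sigma_0=1$, fixed in the Definition, so that no separate boundary cases are needed.
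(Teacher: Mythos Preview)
Your proposal is correct and follows the same approach as the paper: the paper presents this lemma explicitly as a specialization of Lemma~\ref{bk1} and gives no proof beyond a \qed, while you spell out the substitution $p=1$ in detail, using Lemma~\ref{symmetric} (and its consequences recorded in the Remark) to drop the arguments $p-1$ that become~$0$. Your added care about the boundary indices via the conventions on $\sigma_l$ is appropriate and matches the paper's setup.
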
\qed

\subsection{Determination of the coefficients  $d_k (x_i)$ and $e_k (x_i)$.}

\begin{lem}\label{akxi}
If  $p=x_i$ for some $i=1,\dots ,N$ then 
\be d_k(p)=\frac{x_1\cdot \ldots \cdot x_N}{{p}^{N+2}}\left[\sigma _{N+1-k}\left( p-1;\left(\frac{p-x_j}{x_j}\right) _{j\neq i}\right) \right. \ee
\[ -\left. \sigma _{N+2-k}\left(p-1;\left(\frac{p-x_j}{x_j}\right) _{j\neq i}\right) \right] , \quad k=0, \ldots , N+2 \]

In particular $d_0(x_1,\dots,x_N)=0.$
\end{lem}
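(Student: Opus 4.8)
The plan is to obtain this lemma as a specialization of Lemma \ref{ak1}, in exactly the same spirit as Lemma \ref{a11} was obtained by putting $p=1$ in (\ref{akp}). Here I would instead set $p=x_i$ in (\ref{akp}). The decisive observation is that among the $N$ arguments $\left(\frac{p-x_j}{x_j}\right)_{j=1,\ldots,N}$, the one indexed by $j=i$ becomes $\frac{x_i-x_i}{x_i}=0$, so that both elementary symmetric polynomials occurring in (\ref{akp}) are evaluated at an argument list containing a zero entry.

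The main step is to delete this vanishing entry. For this I would invoke the $y=0$ case of Lemma \ref{symmetric}, namely $\sigma_l(0,y_1,\ldots,y_m)=\sigma_l(y_1,\ldots,y_m)$ (every monomial of $\sigma_l$ that involves the zero variable vanishes). Applying this to both $\sigma_{N+1-k}$ and $\sigma_{N+2-k}$, with the zero argument $\frac{p-x_i}{x_i}$ removed, turns the argument list $p-1;\left(\frac{p-x_j}{x_j}\right)_{j=1,\ldots,N}$ into $p-1;\left(\frac{p-x_j}{x_j}\right)_{j\neq i}$ and leaves the prefactor $\frac{x_1\cdot\ldots\cdot x_N}{p^{N+2}}$ unchanged (note $p^{N+2}=x_i^{N+2}\neq 0$). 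This yields precisely the asserted formula.

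For the displayed special case $d_0=0$, I would simply count arguments. After deletion of the $j=i$ term, the two symmetric polynomials are taken in the $N$ quantities $p-1$ and $\left(\frac{p-x_j}{x_j}\right)_{j\neq i}$ (that is, $1+(N-1)=N$ arguments). Setting $k=0$ produces $\sigma_{N+1}$ and $\sigma_{N+2}$ of these $N$ variables, and both vanish by our convention that $\sigma_l$ of $N$ variables is $0$ whenever $l>N$; hence the bracket, and therefore $d_0$, is zero. I do not expect a genuine obstacle here—the entire content is the zero-argument reduction for $\sigma_l$. The only point that deserves care is the index bookkeeping: one must check that reducing the number of arguments from $N+1$ to $N$ is consistent with keeping the same index shifts $N+1-k$, $N+2-k$ and the same range $k=0,\ldots,N+2$, so that the boundary vanishing matches up correctly.
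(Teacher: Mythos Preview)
Your argument is correct and is exactly the approach the paper takes: its entire proof is the single line ``Follows from Lemma \ref{ak1},'' and you have simply spelled out the specialization $p=x_i$ together with the $y=0$ case of Lemma \ref{symmetric} that makes the vanishing argument $\frac{p-x_i}{x_i}$ disappear. Your check that $d_0=0$ by counting arguments is also fine.
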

\begin{proof}
Follows from Lemma \ref{ak1}.
\end{proof}
\begin{lem}\label{bk1xi}
For $p=x_i, i=1,\dots , N$   the coefficients $e_k(x_i)$ are given by the formula (\ref{bbkk}) where
\[ v_k(x_i)= \sigma _{N+1-k} \left(x_i-1, \left(\frac{x_i-x_j}{x_j}\right)_{j\neq i} \right)  ,\]
\[ w_k(x_i)=\sigma _{N+1-k} \left( \left(\frac{x_i-x_j}{x_j}\right)_{j=1, \ldots ,N} \right)-\sigma _{N-k} \left(\left(\frac{x_i-x_j}{x_j}\right)_{j\neq i} \right) , \] 
\[ z_{k,l}(p)=\sigma _{N+1-k}  \left( x_i-1,  \left(\frac{x_i-x_j}{x_j}\right)_{j\neq i,l} \right)-\sigma _{N-k}  \left( x_i-1,  \left(\frac{x_i-x_j}{x_j}\right)_{j\neq i,l} \right).\]
\end{lem}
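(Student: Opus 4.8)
The plan is to read off Lemma \ref{bk1xi} as the specialization $p=x_i$ of Lemma \ref{bk1}: the master formula (\ref{bbkk}) for $e_k(p)$ has already been proved there for arbitrary admissible $p$, so all that remains is to rewrite its three ingredients $v_k(p)$, $w_k(p)$, $z_{k,l}(p)$ when $p=x_i$. The whole phenomenon is governed by a single observation: at $p=x_i$ the entry $\frac{p-x_j}{x_j}$ indexed by $j=i$ degenerates to $\frac{x_i-x_i}{x_i}=0$. Since $x_i\neq 0$, the prefactor $1/p^{N+1}$ and the factorization $1-x_jt=\frac{x_j}{p}\bigl(u+\frac{p-x_j}{x_j}\bigr)$ that underlie Lemma \ref{bk1} remain valid, so the substitution is legitimate and its only effect is to insert this one vanishing argument into the elementary symmetric polynomials.

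First I would record two consequences of Lemma \ref{symmetric}. Taking $y=0$ gives $\sigma_l(0,y_1,\dots,y_k)=\sigma_l(y_1,\dots,y_k)$, so a zero entry may be deleted from the arguments of any $\sigma_l$; taking $y=-1$ gives $\sigma_l(-1,y_1,\dots,y_k)=\sigma_l(y_1,\dots,y_k)-\sigma_{l-1}(y_1,\dots,y_k)$, which trades an explicit $-1$ argument for a difference of two $\sigma$'s not involving it. These two moves are exactly what carries the $p$-formulas over to $p=x_i$. For $v_k$ there is no $-1$ present, so one only deletes the vanishing $j=i$ entry and obtains $v_k(x_i)=\sigma_{N+1-k}\bigl(x_i-1,(\tfrac{x_i-x_j}{x_j})_{j\neq i}\bigr)$. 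For $w_k=\sigma_{N+1-k}\bigl(-1,(\tfrac{p-x_j}{x_j})_{j}\bigr)$ one deletes the $j=i$ zero and then applies the $y=-1$ reduction, giving the difference $\sigma_{N+1-k}-\sigma_{N-k}$ over $j\neq i$; the first term may be rewritten over the full range $j=1,\dots,N$ since reinstating a zero argument leaves a $\sigma$ unchanged, which is the form stated in the lemma.

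The coefficient $z_{k,l}(x_i)=\sigma_{N+1-k}\bigl(-1,x_i-1,(\tfrac{x_i-x_j}{x_j})_{j\neq l}\bigr)$ is the only place needing care, because whether the vanishing entry is present now depends on $l$. When $l\neq i$ the product list $\{j\neq l\}$ still contains $i$, so one deletes that zero and then applies the $y=-1$ reduction; when $l=i$ the list $\{j\neq l\}$ already excludes $i$, there is no zero to delete, and one applies the $y=-1$ reduction directly. In both branches the surviving arguments are $x_i-1$ together with $(\tfrac{x_i-x_j}{x_j})$ for $j\notin\{i,l\}$, so the uniform index condition $j\neq i,l$ records both cases at once and yields precisely the claimed $\sigma_{N+1-k}-\sigma_{N-k}$. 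Substituting these three specialized ingredients back into (\ref{bbkk}) with $p=x_i$ produces $e_k(x_i)$ and finishes the proof; the main thing to verify is that the two routes in the $z_{k,l}$ computation genuinely terminate at the same pair of symmetric polynomials, which the notation $j\neq i,l$ encapsulates.
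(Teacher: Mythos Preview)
Your proposal is correct and follows exactly the route the paper intends: the paper's own proof is the one-line ``Follows from Lemma~\ref{bk1}'', and what you have written is precisely the specialization $p=x_i$ of that lemma, carried out in full using the $y=0$ and $y=-1$ instances of Lemma~\ref{symmetric}. Your treatment of the two cases $l\neq i$ and $l=i$ in the $z_{k,l}$ term, and your remark that reinstating the zero entry in $w_k$ recovers the full-range form stated in the lemma, make explicit the small bookkeeping that the paper leaves implicit.
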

\begin{proof}
Follows from Lemma \ref{bk1}.
\end{proof}


{}

\end{document}